\documentclass[11pt]{amsart}

\usepackage[T1]{fontenc}
\usepackage[utf8]{inputenc}
\usepackage[english]{babel}
\usepackage{cmap}
\usepackage{amsmath,amssymb,amsthm}
\usepackage{anysize}
\usepackage{xcolor}
\usepackage{hyperref}
\usepackage{microtype}

\marginsize{2cm}{2cm}{1.5cm}{1.5cm}
\hypersetup{
  colorlinks = true,
  urlcolor   = blue,
  linkcolor  = blue,
  citecolor  = red
}

\theoremstyle{plain}
\newtheorem{thm}{Theorem}[section]
\newtheorem{lem}{Lemma}[section]
\theoremstyle{definition}
\newtheorem{rem}{Remark}[section]

\newcommand{\Href}[2]{\hyperref[#2]{#1~\ref{#2}}}
\newcommand{\norm}[1]{\left\|#1\right\|}
\newcommand{\conv}{\mathrm{conv}}
\providecommand{\parenth}[1]{\left(#1\right)}
\providecommand{\braces}[1]{\left\{#1\right\}}
\newcommand{\R}{\mathbb R}
\newcommand{\Sph}{\mathbb S}
\newcommand{\id}{\mathrm{Id}}
\newcommand{\vol}[1]{\operatorname{vol}\nolimits_{#1}}
\newcommand{\di}{\,\mathrm{d}}
\newcommand{\crosp}{\Diamond}
\newcommand{\cube}{\Box}
\renewcommand{\epsilon}{\varepsilon}

\newcommand{\opnorm}[1]{\norm{#1}_{\mathrm{op}}}
\newcommand{\cT}{\mathcal T}
\newcommand{\gauss}{\gamma}

\title[Maximal projections of the cross-polytope]
{The maximal volume of projections of the cross-polytope}
\author{Grigory Ivanov}
\address{Grigory~Ivanov: Pontif\'icia Universidade Cat\'olica do Rio de Janeiro\\
Departamento de Matem\'atica\\
Rua Marqu\^es de S\~ao Vicente, 225\\
Edif\'{\i}cio Cardeal Leme, sala 862\\
22451-900 G\'avea, Rio de Janeiro, Brazil}
\email{\href{mailto:grimivanov@gmail.com}{grimivanov@gmail.com}}
\thanks{G.I. is supported by Projeto Paz and Coordenacao de Aperfeicoamento de Pessoal de Nivel Superior - Brasil (CAPES) - 23038.015548/2016-06}

\subjclass[2020]{52A38, 52A40, 15A45}
\keywords{cross-polytope,  tight frame, Gaussian solid angle, unit decomposition}

\begin{document}

\begin{abstract}
We prove the conjectured sharp upper bound for the volume of an arbitrary lower-dimensional orthogonal projection of the regular cross-polytope.  More generally, for every spanning family $v_1,\dots,v_n \in \R^k, $ we prove
\[
 \vol{k}\conv\{\pm v_1, \dots, \pm v_n\}
 \le \frac{2^k}{k!}
 \sqrt{\det\!\left(\sum_{i=1}^n v_i\otimes v_i\right)}.
\]
After the natural normalization, equality holds precisely when the non-zero vectors form an orthonormal basis.   We triangulate the boundary of the absolute convex hull, compare the determinant of every radial simplex with the Gaussian solid angle of its positive cone, and then use that the radial cones form a complete fan.  As a consequence, the volume of the projection of $\crosp^n$ onto any $k$-dimensional subspace is at most $2^k/k!$, with equality only for coordinate subspaces.  

\end{abstract}

\maketitle

\section{Introduction}

The celebrated Vaaler's inequality \cite{vaaler1979geometric} says that  the volume of a central section of the cube $\cube^n =[-1,1]^n$ by a subspace $E$ of dimension $k$ is at least the volume of $\cube^k:$
\[
\vol{k}\parenth{\cube^n \cap E} \ge \vol{k} \cube^k.
\] 

The dual problem \cite{ball1995mahler} concerns the volumes of projections of the standard cross-polytope $\crosp^n$, which is the convex hull of the standard basis vectors of $\R^n$ and their opposites:
\[
 \crosp^n
 = \conv \braces{\pm e_1, \dots, \pm e_n}.
\]
The conjecture asserts that the volume of the orthogonal projection of $\crosp^n$ onto a $k$-dimensional subspace $E$ is at most the volume of $\crosp^k$:
\[
 \vol{k} \parenth{P_E \crosp^n} \le \vol{k} {\crosp^k}.
\]

Despite seemingly simple formulation, before the present work, the conjecture had been confirmed only in the cases $k = 2, 3$ and $k = n-1$ \cite{ball1995mahler, barthe2002hyperplane, ivanov2021volume, Filliman1988},
see also recent survey \cite{nayar2023extremal}.

Our main result is the complete resolution of the conjecture.
\begin{thm}
\label{thm:crosp_proj}
Let $E$ be a $k$-dimensional subspace of $\R^n,$ and 
$P_E$ denotes the orthogonal projection onto $E.$
Then 
\[
\vol{k} \parenth{P_E \crosp^n} \le \vol{k} \crosp^k.
\]
The equality holds if and only if $E$ is a coordinate subspace. 
\end{thm}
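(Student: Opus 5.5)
Set $v_i = P_E e_i \in E \cong \R^k$, so that $P_E \crosp^n = \conv\{\pm v_1,\dots,\pm v_n\}$ and $\sum_i v_i\otimes v_i = P_E \big|_E = \id_E$. The theorem then reduces to the general inequality from the abstract,
\[
\vol{k}\conv\{\pm v_1,\dots,\pm v_n\} \le \frac{2^k}{k!}\sqrt{\det\Big(\sum_i v_i\otimes v_i\Big)},
\]
applied to a family whose frame operator is the identity. Its right-hand side is exactly $2^k/k! = \vol{k}\crosp^k$. I will prove this general inequality together with its equality case, and then translate the equality case back to the subspace $E$.

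\textbf{Reduction to radial simplices.} Let $K=\conv\{\pm v_i\}$. We may assume $K$ is full-dimensional. Triangulate the boundary of $K$ into $(k-1)$-simplices whose vertices lie among the $\pm v_i$; perturbing to general position if needed, each simplex has $k$ vertices $\epsilon_1 v_{i_1},\dots,\epsilon_k v_{i_k}$ with distinct indices, since a facet cannot contain both $v$ and $-v$. Coning from the origin writes $\vol{k}K$ as a sum of $|\det(\epsilon_j v_{i_j})|/k!$. The positive cones $C_\sigma=\mathrm{pos}\{\epsilon_j v_{i_j}\}$ form a complete fan, so their Gaussian measures $\gauss(C_\sigma)$ sum to $1$.

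\textbf{Key local inequality.} By a linear change of variables we normalize $\sum v_i\otimes v_i=\id$; both sides of the general inequality transform by $|\det|$, so this is harmless. It then suffices to prove, for each simplex $\sigma$,
\[
|\det(\epsilon_j v_{i_j})| \le 2^k\,\gauss(C_\sigma),
\]
because summing over $\sigma$ gives $\vol{k}K\le \frac{2^k}{k!}\sum_\sigma\gauss(C_\sigma) = 2^k/k!$.

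This local inequality is the main obstacle. The bound cannot hold for arbitrary $k$ vectors: it has to use the tight-frame condition, and in particular the fact that the selected vectors $w_j=\epsilon_j v_{i_j}$ satisfy $\sum_j w_j\otimes w_j\preceq \id$. To prove it I would write
\[
\gauss(C_\sigma) = \frac{|\det W|}{(2\pi)^{k/2}}\int_{\R^k_+} e^{-|Wx|^2/2}\di x ,
\]
where $W$ is the matrix with columns $w_j$. The inequality then becomes
\[
\int_{\R^k_+} e^{-\langle Gx,x\rangle/2}\di x \ge \Big(\frac{\pi}{2}\Big)^{k/2},
\]
with $G=W^TW$ the Gram matrix. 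The condition $WW^T\preceq\id$ gives $G\preceq \id$, which already handles the diagonal part. The difficulty is the sign of the off-diagonal entries. This is exactly where the fact that $\sigma$ is a boundary facet must enter: the facet hyperplane $\langle u,\cdot\rangle=1$ has $|\langle u,v_i\rangle|\le1$ for all $i$.

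My first attempt would be to compare the integral with $\int_{\R^k_+} e^{-|x|^2/2}\di x=(\pi/2)^{k/2}$ by a Prékopa–Leindler or symmetrization argument over the $2^k$ sign choices $\pm w_j$. Failing that, I would use an induction on $k$ that slices by the facet normal $u$, together with the inequality $\sum_i\langle u,v_i\rangle^2 = |u|^2$ and the bound $|\langle u,v_i\rangle|\le 1$.

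\textbf{Equality case.} Equality forces equality in the local inequality for every $\sigma$, which should force $G=\id$, i.e.\ the $w_j$ orthonormal. Since $\sum_i v_i\otimes v_i=\id$, orthonormality of $k$ of the $v_i$ leaves no room for the others, so all remaining $v_i$ vanish. For the theorem this means $E$ contains $k$ coordinate vectors and is orthogonal to the rest, so $E$ is a coordinate subspace. Conversely, every coordinate subspace gives $P_E\crosp^n=\crosp^k$, so equality holds there.
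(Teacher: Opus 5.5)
Your reduction, the boundary triangulation, the distinct-indices observation giving $\sum_j w_j\otimes w_j\preceq\id_k$, and the summation over the complete fan are all the same as in the paper. The gap is the local inequality $|\det W|\le 2^k\gauss_k(C_\sigma)$. You call it the main obstacle and leave it unproved, offering only Pr\'ekopa--Leindler, symmetrization, or an induction on the facet normal.

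None of that is needed, and the diagnosis behind it is mistaken. The condition $G\preceq\id_k$ is not a statement about the diagonal of $G$. It controls the whole quadratic form: $\langle Gx,x\rangle=|Wx|^2\le|x|^2$ for every $x\in\R^k$, i.e.\ $\opnorm{W}\le 1$, since $W^TW$ and $WW^T$ have the same spectrum. Hence $e^{-|Wx|^2/2}\ge e^{-|x|^2/2}$ pointwise on $\R^k_+$. Integrating gives exactly your target $\int_{\R^k_+}e^{-\langle Gx,x\rangle/2}\di x\ge(\pi/2)^{k/2}$. The signs of the off-diagonal entries of $G$ play no role. The facet condition $|\langle u,v_i\rangle|\le 1$ is never used; the boundary structure matters only through the distinct-indices property, which you already used to get $WW^T\preceq\id_k$. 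This one-line comparison is the paper's key lemma. As written, your proposal stops short of it, so the theorem is not proved.

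The same pointwise comparison settles the equality case, which you leave at ``should force $G=\id$''. If equality holds for some $\sigma$, the nonnegative continuous function $e^{-|Wx|^2/2}-e^{-|x|^2/2}$ has zero integral over $\R^k_+$. So the quadratic form $|x|^2-\langle Gx,x\rangle$ vanishes on the open orthant, hence identically, and $G=\id_k$. Your remaining deduction is then correct: the other $v_i$ vanish, $|P_Ee_{i_r}|=1$ forces $e_{i_r}\in E$, and $E$ is a coordinate subspace.

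One minor correction: do not perturb to general position. Perturbing the $v_i$ destroys the tight-frame identity and changes the volume. Instead, triangulate each facet of $K$ using only its own vertices, which is always possible. The distinct-index property then holds automatically, because no proper face contains both $v$ and $-v$.
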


The proof is simple but non-trivial and relies on a counting trick. First, following an approach developed earlier by the author \cite{ivanov2020volume, ivanovframes}, we reformulate the problem in terms of tight frames.

We call an $n$-tuple of vectors $\braces{v_1, \dots, v_n}$ of a space $L$
a \emph{tight frame} if 
\[
\sum_{i=1}^n v_i\otimes v_i= \id_L,
\]
where $\id_L$ denotes the identity operator on $L.$

The  reformulation is immediate, since the vectors
\(
 v_i=P_Ee_i\in E,
\)
$ 1 \le i \le n,$ satisfy
\[
\sum_{i=1}^n v_i\otimes v_i= \id_E
\]
and
\[
 P_E\crosp^n=\conv\{\pm v_1,\ldots,\pm v_n\}.
\]
Thus, \Href{Theorem}{thm:crosp_proj} follows from the following affine-invariant statement.

\begin{thm}\label{thm:affine-main}
Let $v_1, \dots, v_n \in \R^k$ span $\R^k$, and put
\[
 P=\conv\{\pm v_1,\dots,\pm v_n\},
 \qquad
 A=\sum_{i=1}^n v_i\otimes v_i.
\]
Then
\begin{equation}\label{eq:affine-main}
 \vol{k}(P)
 \le \frac{2^k}{k!}\sqrt{\det A}.
\end{equation}
Equality holds if and only if the non-zero vectors among
\[
 A^{-1/2}v_1, \dots, A^{-1/2}v_n
\]
form an orthonormal basis of $\R^k$.
\end{thm}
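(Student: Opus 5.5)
Both sides of \eqref{eq:affine-main} change by the same factor $|\det T|$ when every $v_i$ is replaced by $Tv_i$ with $T$ invertible. So I first normalize to $A=\id$: the family $\{v_i\}$ becomes a tight frame and the claim reads $\vol{k}(P)\le 2^k/k!$. Zero vectors affect neither side, so I discard them. For a tight frame, $\sum_i |v_i|^2=k$ and $|v_i|\le 1$.

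Next I triangulate the boundary of $P$. Each facet of $P$ is a $(k-1)$-dimensional polytope whose vertices are among the $\pm v_i$, and I triangulate it into simplices using those vertices. This can be done so that every simplex $\conv\{\epsilon_1 v_{i_1},\dots,\epsilon_k v_{i_k}\}$ uses distinct indices: a facet never contains both $v_i$ and $-v_i$, because then it would contain the origin. Coning each boundary simplex from the origin gives
\[
\vol{k}(P)=\sum_{\sigma}\frac{1}{k!}\,|\det(\epsilon_1 v_{i_1},\dots,\epsilon_k v_{i_k})|,
\]
and the positive cones $C_\sigma=\mathrm{pos}\{\epsilon_j v_{i_j}\}$ form a complete fan in $\R^k$. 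Let $\gauss$ be the standard Gaussian measure. Completeness of the fan gives $\sum_\sigma \gauss(C_\sigma)=1$. It therefore suffices to prove the key local inequality
\[
|\det(w_1,\dots,w_k)|\le 2^k\,\gauss\bigl(\mathrm{pos}\{w_1,\dots,w_k\}\bigr)
\]
for the vectors $w_j$ of a radial simplex coming from the tight frame. Summing this over the fan then yields $\vol{k}(P)\le 2^k/k!$.

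This local inequality is the main obstacle, and it cannot hold for arbitrary vectors. Thin cones spanned by long vectors violate it. The tight-frame structure, or at least the fact that the simplex lies on a facet of $P$, must therefore enter. Write $W$ for the matrix with columns $w_j$. Then $\gauss(\mathrm{pos}\, W)=|\det W|\int_{\R_+^k}(2\pi)^{-k/2}e^{-|Wx|^2/2}\di x$. The inequality becomes
\[
\int_{\R_+^k}e^{-|Wx|^2/2}\di x\ \ge\ (\pi/2)^{k/2}=\int_{\R_+^k}e^{-|x|^2/2}\di x,
\]
that is, a comparison against the case where $W$ is orthogonal.

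I would use the facet condition. There is a linear functional $u$ with $\langle u,w_j\rangle=1$ for every $j$ and $\langle u,\pm v_i\rangle\le 1$ for all $i$. The tight-frame identity then gives $|u|^2=\sum_i\langle u,v_i\rangle^2\le n$. That bound is too weak, so the counting trick has to come from combining this with the fan structure rather than from bounding each cone separately. My first attempt would be a weighted version: prove $|\det W|\le 2^k\gauss(C_\sigma)\cdot\phi(\sigma)$ with weights $\phi$ whose Gaussian-weighted average is at most $1$. For example, take $\phi$ built from $\sum_{x\in C_\sigma}$ of $\sum_i\langle x,v_i\rangle^2=|x|^2$, using Jensen in the form $|\det W|\le\prod|w_j|$ together with $\sum|v_i|^2=k$.

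The equality case then follows by tracing equality in the local inequality. Equality forces every $W$ to be orthogonal with unit columns, and hence the nonzero $v_i$ form an orthonormal basis.
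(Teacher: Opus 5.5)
\textbf{There is a genuine gap: the key local inequality is never proved.} Your reduction matches the paper's. You normalize to a tight frame, triangulate $\partial P$ by simplices with distinct indices, and use $k!\,\vol{k}(P)=\sum_\sigma|\det W_\sigma|$ together with $\sum_\sigma\gauss(C_\sigma)=1$. You then rewrite the local inequality as
\[
\int_{\R_+^k}e^{-|Wx|^2/2}\di x\ge\int_{\R_+^k}e^{-|x|^2/2}\di x .
\]
But that local inequality is the whole content of the theorem, and you do not establish it. What you offer in its place is not an argument: a supporting functional $u$ with $|u|^2\le n$, a weighted inequality with unspecified weights $\phi$, and Hadamard's bound $|\det W|\le\prod_j|w_j|$. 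Your claim that the estimate must come from the fan structure rather than from bounding each cone separately is also mistaken. The paper bounds each cone separately.

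\textbf{The missing idea.} You already have both ingredients. The indices $i_1,\dots,i_k$ of a boundary simplex are distinct, so
\[
WW^T=\sum_{j=1}^k w_j\otimes w_j=\sum_{j=1}^k v_{i_j}\otimes v_{i_j}\preceq\sum_{i=1}^n v_i\otimes v_i=\id_k .
\]
Hence $\opnorm{W}\le 1$, so $|Wx|\le|x|$ and $e^{-|Wx|^2/2}\ge e^{-|x|^2/2}$ for every $x$. The integral comparison you wrote down follows at once. This is exactly how the tight-frame structure excludes your ``thin cones spanned by long vectors.''

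\textbf{The equality case.} In your proposal it is only asserted; the same pointwise comparison settles it. Equality forces the continuous nonnegative difference of the two integrands to vanish on the open orthant. Then the quadratic form $|x|^2-|Wx|^2$ vanishes on an open set, hence identically, so $W^TW=\id_k$. A single such $W$ gives $\sum_j v_{i_j}\otimes v_{i_j}=\id_k$. Subtracting this from the frame identity and taking traces shows that every other $v_i$ is zero.
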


The second and main observation is a certain dualization of the arguments in \cite{karasev2026rogers} and \cite{akopyan2019lower}, which gave new proofs of Vaaler's inequality based, respectively, on simplicial partitions and mass transport. Here the key point is a simple counting argument. After the normalization $A = \id_k$, every $k$-element subfamily of the frame determines a contraction. Consequently, the absolute determinant of such a $k$-tuple is at most $2^k$ times the Gaussian solid angle of its positive cone. We then triangulate the boundary of the absolute convex hull using its vertices. The cones over the top-dimensional simplices form the standard radial partition of $\R^k$: their interiors are disjoint, and they cover the whole space up to a finite union of lower-dimensional cones. The Gaussian solid angles therefore sum to one, whereas the corresponding determinants sum to $k!$ times the volume.

Sharp inequalities for sections of the cube are often analytic. Ball's cube-slicing theorem uses Fourier analysis \cite{ball1986cube}, while his geometric form of the Brascamp--Lieb inequality \cite{brascamp1976extensions,ball1989volumes} and Barthe's reverse Brascamp--Lieb inequality \cite{barthe1998reverse} provide powerful primal and dual tools. We initially expected the projection conjecture to require a new Brascamp--Lieb-type inequality. Instead, the Gaussian measure enters only locally, through its monotonicity under contractions, and the global estimate follows from the standard radial triangulation. As far as we know, the opposite extremal problem, concerning the minimal volume of a projection of $\crosp^n$, remains open; see \cite[Conjecture~1.4]{ivanov2021volume}.

\section{A bound on determinant via Gaussian solid angle}

 For a Borel cone $C\subset\R^k$, define its Gaussian solid angle by
\[
 \gauss_k(C):=
 \frac{1}{(2\pi)^{k/2}}\int_C e^{-|x|^2/2} \di x.
\]
For cones with apex at the origin, this is the normalized spherical measure of $C\cap\Sph^{k-1}$.

For simplicity, the identity on $\R^m$ is denoted by 
$\id_m.$

The following lemma is the key point.

\begin{lem}
\label{lem:det-angle}
Let $w_1,\dots, w_k \in \R^k$ be linearly independent, set
\(
 W = (w_1, \dots, w_k)
\)
and
\(
 C_W = W\! \parenth{\R_+^k}.
\)
Assume
\begin{equation}\label{eq:contraction-assumption}
 WW^T=\sum_{i=1}^k w_i\otimes w_i \preceq \id_k.
\end{equation}
Then
\begin{equation}\label{eq:det-angle}
 {|\det W|\le 2^k\gauss_k(C_W).}
\end{equation}
Equality holds if and only if $W$ is an orthogonal matrix.
\end{lem}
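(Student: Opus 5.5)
Change variables $x = Wy$ with $y\in\R_+^k$ in the integral defining $\gauss_k(C_W)$:
\[
 \gauss_k(C_W)=\frac{|\det W|}{(2\pi)^{k/2}}\int_{\R_+^k} e^{-|Wy|^2/2}\di y .
\]
The whole proof then reduces to bounding $|Wy|^2$ from above by $|y|^2$. This follows from the contraction assumption~\eqref{eq:contraction-assumption}. Since $W$ is square and invertible, $WW^T\preceq \id_k$ means $\opnorm{W^T}\le 1$. Hence $\opnorm{W}=\opnorm{W^T}\le 1$, so $W^TW\preceq\id_k$ as well, and $|Wy|^2\le |y|^2$ for every $y$. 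Therefore
\[
 \gauss_k(C_W)\ge \frac{|\det W|}{(2\pi)^{k/2}}\int_{\R_+^k} e^{-|y|^2/2}\di y=|\det W|\,\gauss_k(\R_+^k)=2^{-k}|\det W|,
\]
since the positive orthant carries Gaussian measure $2^{-k}$ by symmetry, the $2^k$ orthants being congruent and covering $\R^k$ up to a null set. This is exactly~\eqref{eq:det-angle}.

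For the equality case, suppose equality holds in~\eqref{eq:det-angle}. The integrands satisfy $e^{-|Wy|^2/2}\ge e^{-|y|^2/2}$ pointwise and are continuous, so equality of the integrals forces $|Wy|=|y|$ for all $y$ in the open orthant $\R_+^k$. This says the quadratic form $y^T(\id_k-W^TW)y$ vanishes on an open set. A quadratic form vanishing on a nonempty open set is identically zero, by polarization or because a polynomial vanishing on an open set is zero. Hence $W^TW=\id_k$, so $W$ is orthogonal. Conversely, if $W$ is orthogonal then $C_W$ is the image of the orthant under an isometry, so $\gauss_k(C_W)=2^{-k}$ and $|\det W|=1$, giving equality.

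I expect no serious obstacle. The only points needing care are the passage from $WW^T\preceq\id_k$ to $W^TW\preceq\id_k$, which holds because $WW^T$ and $W^TW$ share a spectrum and the matrix is square, and the rigidity argument in the equality case. The latter uses strictness of the pointwise inequality on an open set to upgrade "isometric on the orthant" to "orthogonal."
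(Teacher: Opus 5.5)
Your proof is correct and follows the paper's argument essentially step for step: the change of variables $x=Wy$, the bound $\opnorm{W}\le 1$ giving $e^{-|Wy|^2/2}\ge e^{-|y|^2/2}$, and rigidity in the equality case because the quadratic form $y\mapsto |y|^2-|Wy|^2$ vanishes on the open orthant. One small remark: $WW^T\preceq\id_k$ gives $\opnorm{W^T}\le 1$, and $\opnorm{W}=\opnorm{W^T}$ holds for every matrix, so you do not need squareness or invertibility at that step.
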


\begin{proof}
The assumption \eqref{eq:contraction-assumption} implies
\[
 \opnorm{W}^2=\opnorm{WW^T}\le 1,
\]
so $|Wt|\le |t|$ for every $t\in\R^k$.  Changing variables $x = Wt$ gives
\begin{align*}
 \gauss_k(C_W)
 &=\frac{|\det W|}{(2\pi)^{k/2}}
   \int_{\R_+^k}\exp\left(-\frac{|Wt|^2}{2}\right) \di t
   \\
 &\geq
 \frac{|\det W|}{(2\pi)^{k/2}}
   \int_{\R_+^k}\exp\left(-\frac{|t|^2}{2}\right) \di t
 =\frac{|\det W|}{2^k}.
\end{align*}
This proves \eqref{eq:det-angle}.

If equality holds, then the continuous nonnegative function
\[
 t\longmapsto
 \exp\left(-\frac{|Wt|^2}{2}\right)
 -\exp\left(-\frac{|t|^2}{2}\right)
\]
has integral zero over $\R_+^k$.  Hence, $|Wt|=|t|$ on the interior of $\R_+^k$.  The quadratic form
\(
 t\longmapsto |t|^2-|Wt|^2
\)
therefore vanishes on an open set and consequently vanishes identically.  
Thus, $W^TW=\id_k$, and $W$ is orthogonal.  The converse is immediate, since  $\R_+^k$ has Gaussian measure $2^{-k}$.
\end{proof}

\section{Proof of \Href{Theorem}{thm:affine-main}}

\begin{proof}
Since $v_1, \dots, v_n$ span $\R^k$, the ``frame operator'' $A$ is positive definite. Put
\[
 u_i = A^{-1/2} v_i,
 \qquad 1 \le i \le n,
\]
and
\[
 Q = \conv \braces{\pm u_1, \dots, \pm u_n}.
\]
Then
\begin{equation}
\label{eq:parseval-normalization}
 \sum_{i = 1}^n u_i \otimes u_i = \id_k.
\end{equation}
Therefore, it suffices to prove that
\[
 \vol{k} {Q} \le \frac{2^k}{k!}.
\]

Choose a triangulation $\cT$ of $\partial Q$ that is subordinate to the face structure of $Q$ and uses only vertices of $Q$, and denote by $\cT_{k-1}$ the family of its top-dimensional simplices. Fix
\[
 T = \conv \braces{w_1, \dots, w_k} \in \cT_{k-1}.
\]
Each $w_r$ is one of the vectors $\pm u_i$. Since a proper face of a centrally symmetric polytope containing the origin in its interior cannot contain both $u_i$ and $-u_i$, there are distinct indices $i_1, \dots, i_k$ and signs $\epsilon_1, \dots, \epsilon_k \in \braces{-1,1}$ such that
\[
 w_r = \epsilon_r u_{i_r},
 \qquad 1 \le r \le k.
\]
Consequently,
\begin{equation}
\label{eq:subset-contraction}
 \sum_{r = 1}^k w_r \otimes w_r
 = \sum_{r = 1}^k u_{i_r} \otimes u_{i_r}
 \preceq \sum_{i = 1}^n u_i \otimes u_i
 = \id_k.
\end{equation}
The vectors $w_1, \dots, w_k$ are linearly independent. Thus, \Href{Lemma}{lem:det-angle} applies to
\[
 W_T = \parenth{w_1, \dots, w_k}
 \qquad \text{and} \qquad
 C_T = W_T \! \parenth{\R_+^k}.
\]
It gives
\begin{equation}
\label{eq:local-simplex-bound}
 |\det W_T| \le 2^k \gauss_k \! \parenth{C_T}.
\end{equation}

Since the origin lies in the interior of $Q$, the simplices obtained by joining the origin to the elements of $\cT_{k-1}$ form a triangulation of $Q$. Moreover, the cones $C_T$, $T \in \cT_{k-1}$, have pairwise disjoint interiors and cover $\R^k$ up to a finite union of lower-dimensional cones. Therefore,
\begin{equation}
\label{eq:radial-identities}
 k! \vol{k} {Q}
 = \sum_{T \in \cT_{k-1}} |\det W_T|
 \qquad \text{and} \qquad
 \sum_{T \in \cT_{k-1}} \gauss_k \! \parenth{C_T} = 1.
\end{equation}
Summing \eqref{eq:local-simplex-bound} and using \eqref{eq:radial-identities}, we obtain
\[
 k! \vol{k} {Q}
 \le 2^k.
\]

Suppose that equality holds. Then equality holds in \Href{Lemma}{lem:det-angle} for every $T \in \cT_{k-1}$. Choose one such simplex. Its generators $w_1, \dots, w_k$ form an orthonormal basis, and therefore
\[
 \sum_{r = 1}^k u_{i_r} \otimes u_{i_r} = \id_k.
\]
Comparing this identity with \eqref{eq:parseval-normalization}, we conclude that all the  remaining vectors vanish. Conversely, an orthonormal basis together with any number of zero vectors generates an orthogonal copy of $\crosp^k$, whose volume is $\frac{2^k}{k!}$.

Finally, the linear map $A^{1/2}$ sends $Q$ onto $P$ and multiplies $k$-dimensional volume by $\sqrt{\det A}$. This proves \eqref{eq:affine-main} and its equality statement.
\end{proof}

\begin{rem}
In \cite{ivanov2021volume}, it was proved that a local maximizer of the volume among projected cross-polytopes is simplicial. This fact is consistent with the argument above, but it is not needed here: the boundary of every polytope admits a triangulation using only its vertices, and the corresponding radial cones give the partition used in \eqref{eq:radial-identities}.
\end{rem}

\end{document}